\documentclass[11pt,reqno]{amsproc}
\usepackage[all]{xy}
\usepackage[colorlinks,urlcolor=blue,citecolor=blue,linkcolor=blue]{hyperref}
\usepackage{stix2,xcolor,graphicx,bm,anysize}
\usepackage{caption,subcaption,eqnarray}

\marginsize{15mm}{15mm}{6mm}{5mm}

\newtheorem{theorem}{Theorem}[section]

\theoremstyle{remark}

\newtheorem{remark}[theorem]{\it Remark}

\theoremstyle{definition}

\AtBeginDocument{\,\vskip-2cm}

\numberwithin{equation}{section}
\allowdisplaybreaks

\newcommand{\cdummy}{\cdot}
\newcommand{\mathd}{\mathrm{d}}

\newcommand{\tmop}[1]{\ensuremath{\operatorname{#1}}}
\newcommand{\tmtextit}[1]{\text{{\itshape{#1}}}}

\begin{document}
\title[Physical derivation of the coarea formula \& proof]{Physical derivation of the coarea formula and an elementary proof via
gradient flow}
\author{Shibo Liu\vspace{-1em}}
\dedicatory{Department of Mathematics and Systems Engineering, Florida Institute of Technology\\
Melbourne, FL 32901, USA}
\thanks{Emails: \texttt{\bfseries sliu@fit.edu} (S. Liu)}
\begin{abstract}
In this note, we derive an elementary version of the coarea formula by considering the mass of a solid body with density $g (x)$. Then we present an rigorous proof using the changing variable formula. To this end we construct the diffeomorphism $\Phi$ via the gradient flow and compute its Jacobian determinant via geometric method.
\end{abstract}
\maketitle

The coarea formula is a fundamental result in geometric measure theory. The
most general version of the formula can be found in the treatise
{\cite{federer1996general}} of Federer. To appreciate the formula, a lot of
prerequisites on measure theory and geometry are needed. Therefore, it seems
that an easier proof for a special (but general enough) version of the formula
is meaningful. Such a proof will make this important and useful formula more accessible
to mathematicians (including undergraduate and graduate students) not familiar
with geometric measure theory.

\begin{figure}[h]
\centering
\begin{subfigure}[b]{0.45\textwidth}
\includegraphics[clip,viewport=2 355 155 488]{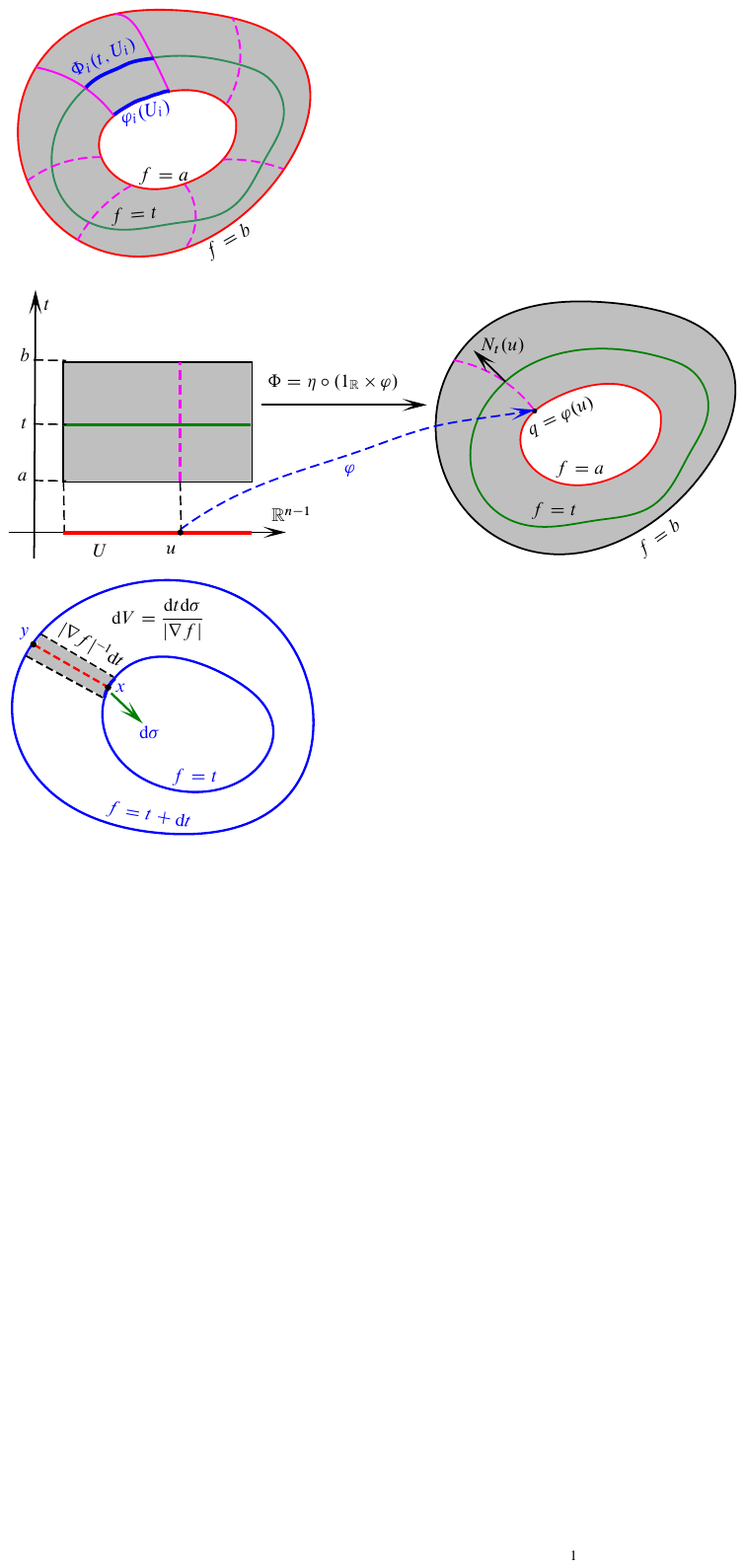}
\caption{Physical interpretation of coarea formula}
\label{fxc}
\end{subfigure}
\hfill
\begin{subfigure}[b]{0.45\textwidth}
\includegraphics[clip,viewport=7 635 155 763]{fig}
\caption{Decomposition of $f^{-1}[a,b]$ and $f^{-1}(t)$}
\label{fyc}
\end{subfigure}
\caption{}
\end{figure}

We start with an inspiring physical interpretation. For our function $f :
\mathbb{R}^n \rightarrow \mathbb{R}$, imagine $\Omega = f^{- 1} [a, b]$ as a
solid body in $\mathbb{R}^n$ with density $g (x)$ at $x \in \Omega$. Then, the
mass of $\Omega$ is
\begin{equation}
  M = \int_{\Omega} g (x) \mathd x \text{.} \label{e}
\end{equation}
Given $t \in [a, b]$, for $x \in f^{- 1} (t)$ we take a surface area element
$\mathd \sigma$ at $x$. Let $\mathd t$ be an infinitesimal increment of $t$
and $y$ be the intersection of the normal line of the level surface $f^{- 1}
(t)$ at $x$ with the nearby level surface $f^{- 1}  (t + \mathd t)$, see
Figure \ref{fxc}.

The first order Taylor expansion yields the following approximation
\begin{equation}
  \mathd t = f (y) - f (x) \approx \nabla f (x) \cdummy (y - x) \text{.}
  \label{e6}
\end{equation}
Observing that both $\nabla f (x)$ and $y - x$ are normal vectors of the level
surface $f^{- 1} (x)$ at $x$, they are parallel. Hence, approximately we have
\begin{equation}
  |y - x| = \frac{\mathd t}{| \nabla f (x) |} \text{.} \label{n}
\end{equation}
Since $\mathd \sigma$ and $\mathd t$ are infinitesimal, the volume $\mathd V$
and the mass $\mathd m$ of the small cylinder (illustrated as the shadowed
region in Figure \ref{fxc}) based on $\mathd \sigma$ and bounded between $f^{- 1}
(t)$ and $f^{- 1}  (t + \mathd t)$ (thus, the height is $|y - x|$) are given
by
\[ \mathd V = \frac{1}{| \nabla f (x) |} \mathd t \mathd \sigma \text{,
   \qquad} \mathd m = \frac{g (x)}{| \nabla f (x) |} \mathd t \mathd \sigma
   \text{.} \]
Integrating $\mathd m$ (with respect to $x$) along the level surface $f^{- 1}
(t)$, we obtain the mass of the region $f^{- 1} [t, t + \mathd t]$:
\[ \mathd M = \int_{f^{- 1} (t)} \mathd m = \int_{f^{- 1} (t)} \frac{g (x)}{|
   \nabla f (x) |} \mathd t \mathd \sigma = \mathd t \int_{f^{- 1} (t)}
   \frac{g (x)}{| \nabla f (x) |} \mathd \sigma \text{.} \]
Now, integrating $\mathd M$ with respect to $t \in [a, b]$, we see that the
mass of $\Omega = f^{- 1} [a, b]$ is
\[ M = \int_a^b \mathd M = \int_a^b \mathd t \int_{f^{- 1} (t)} \frac{g (x)}{|
   \nabla f (x) |} \mathd \sigma \text{.} \]
Comparing this with (\ref{e}), we conclude
\begin{equation}
  \int_{f^{- 1} [a, b]} g (x) \mathd x = \int_a^b \mathd t \int_{f^{- 1} (t)}
  \frac{g (x)}{| \nabla f (x) |} \mathd \sigma \text{.} \label{ec}
\end{equation}
This is an elementary version of the coarea formula.

\begin{theorem}[Coarea formula]
  \label{t7}Let $U$ be an open subset of $\mathbb{R}^n$, $f : U \rightarrow
  \mathbb{R}$ be a $C^2$ function satisfying
  \begin{enumerate}
    \item[$(f)$] $\Omega = f^{- 1} [a, b]$ is compact and free of critical
    points.
  \end{enumerate}
  Then \eqref{ec} holds for any continuous $g : \Omega \rightarrow
  \mathbb{R}$.
\end{theorem}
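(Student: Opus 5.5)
We will prove the formula by building a diffeomorphism $\Phi : f^{-1}(a) \times [a,b] \to \Omega$ from the normalized gradient flow and then applying the change of variables formula, as the abstract announces.

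\textbf{Construction of $\Phi$.} Since $\Omega$ is compact and contains no critical points, $|\nabla f|$ is bounded below by a positive constant on $\Omega$, and hence on a slightly larger open neighborhood $W \subset U$ of $\Omega$. On $W$ consider the $C^1$ vector field
\[
V(x) = \frac{\nabla f(x)}{|\nabla f(x)|^2},
\]
and let $\varphi_s$ be its local flow. Along any trajectory,
\[
\frac{\mathd}{\mathd s} f(\varphi_s(x)) = \nabla f \cdummy V = 1,
\]
so $f(\varphi_s(x)) = f(x) + s$. By compactness, a trajectory starting on $f^{-1}(a)$ stays inside $\Omega$ for $s \in [0, b-a]$ and cannot escape. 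Set $\Sigma = f^{-1}(a)$, which is a compact $C^2$ hypersurface, and define
\[
\Phi(p,t) = \varphi_{t-a}(p).
\]
Then $\Phi$ is a $C^1$ bijection onto $\Omega$. Its inverse is $x \mapsto \bigl(\varphi_{a-f(x)}(x), f(x)\bigr)$, and $\Phi$ restricts to a diffeomorphism $\Sigma \to f^{-1}(t)$ for each $t$. Covering $\Sigma$ by finitely many coordinate patches $\psi : D \subset \mathbb{R}^{n-1} \to \Sigma$, with a partition of unity, we reduce to the change of variables $(u,t) \mapsto \Phi(\psi(u),t)$ from $D \times [a,b]$ into $\mathbb{R}^n$.

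\textbf{Jacobian computation.} At the point $x = \Phi(p,t)$, the partial derivative in $t$ is $V(x)$, which is normal to $f^{-1}(t)$ and has length $1/|\nabla f(x)|$. The partials in $u$ are
\[
\partial_{u_i}\bigl(\Phi(\psi(u),t)\bigr) = D\varphi_{t-a}\,\partial_{u_i}\psi,
\]
and they span the tangent space of $f^{-1}(t)$ at $x$. Geometrically, the determinant of these $n$ columns is the volume of a parallelepiped. Its base, spanned by the $n-1$ tangent vectors, has $(n-1)$-volume equal to $\sqrt{\det G_t(u)}$, where $G_t$ is the Gram matrix of the parametrization $u \mapsto \Phi(\psi(u),t)$ of $f^{-1}(t)$. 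Its height is the normal component of $V$, namely $1/|\nabla f(x)|$. Hence
\[
|\det D| = \frac{\sqrt{\det G_t(u)}}{|\nabla f(x)|}.
\]

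\textbf{Conclusion.} By the change of variables formula and Fubini,
\[
\int_\Omega g\,\mathd x = \int_a^b \mathd t \int_D \frac{g}{|\nabla f|}\sqrt{\det G_t}\,\mathd u.
\]
For fixed $t$, the inner integral is exactly $\int_{f^{-1}(t)} \frac{g}{|\nabla f|}\,\mathd\sigma$, by the definition of the surface integral through the parametrization $u \mapsto \Phi(\psi(u),t)$. Summing over the patches gives \eqref{ec}. The boundary $\partial\Omega$ has measure zero, so working with the closed set $\Omega$ causes no trouble.

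\textbf{Main obstacles.} The hardest step is the Jacobian identity. Making the ``base times height'' argument rigorous amounts to the linear-algebra fact that if $w_1,\dots,w_{n-1}$ span a hyperplane with unit normal $\nu$, then
\[
|\det(w_1,\dots,w_{n-1},v)| = |v \cdummy \nu|\,\sqrt{\det(w_i \cdummy w_j)}.
\]
We plan to prove it by applying an orthogonal change of coordinates that sends $\nu$ to $e_n$. A secondary issue is regularity: because $f$ is $C^2$, $V$ is $C^1$, so the flow and $\Phi$ are $C^1$, which is what the change of variables formula requires.
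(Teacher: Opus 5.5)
Your proposal is correct, and its skeleton matches the paper's: flow of $\nabla f/|\nabla f|^2$ started on $f^{-1}(a)$, the map $(u,t)\mapsto\Phi(\psi(u),t)$, a Jacobian equal to the area density divided by $|\nabla f|$, then change of variables and Fubini. Two sub-steps differ.

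For the Jacobian, the paper expands $\det\Phi'$ along the $t$-column. The cofactors there are the components of the generalized cross product $N_t(u)$ of the $u$-columns, which is normal to $f^{-1}(t)$ and hence parallel to $\nabla f$. This gives $|\det\Phi'|=|\partial_t\Phi\cdot N_t|=|N_t|/|\nabla f|$, and the paper takes $|N_t(u)|\,\mathd u$ as the area element, so it needs no rotation argument. Your base-times-height lemma lands directly on $\sqrt{\det G_t}$, which equals $|N_t|$ by Cauchy--Binet, so the two computations agree.

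For the globalization, the paper treats $f^{-1}(a)$ as a finite union of interiorly disjoint parametrized surfaces over Jordan measurable compact domains. It then uses a change-of-variables theorem that only needs a diffeomorphism between interiors, and it never shows such a decomposition exists for a general compact level set. Your overlapping charts with a partition of unity need only the implicit function theorem and compactness, so your step is the more robust one. To make it work, transport the partition to $\Omega$ along your flow $\varphi_s$, e.g.\ $\tilde\rho_k=\rho_k\circ\pi$ with $\pi(x)=\varphi_{a-f(x)}(x)$. Then on every level set $f^{-1}(t)$ it restricts to a partition of unity subordinate to the patches $u\mapsto\Phi(\psi_k(u),t)$. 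The weight $\rho_k(\psi_k(u))$ must then appear inside your final $\mathd u$-integral, where it is currently missing.
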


\begin{proof}
The condition $(f)$ ensures that $| \nabla f | \geq \varepsilon$ on $W$ for
  some $\varepsilon > 0$ and open set $W$ containing $\Omega$. On $W$ we may
  define a bounded $C^1$ vector field $\phi : W \rightarrow \mathbb{R}^n$ via
  \[ \phi (x) = | \nabla f (x) |^{- 2} \nabla f (x) \text{.} \]
  There are a maximal open subset $G$ of $\mathbb{R} \times W$, and a $C^1$
  map $\eta : G \rightarrow \mathbb{R}^n$ called the \emph{maximal flow} of $\phi$,
  satisfying
  \begin{equation}
    \partial_t \eta (t, p) = \phi (\eta (t, p)) \text{, \quad} \eta (a, p) = p
    \text{; \qquad for } (t, p) \in G \text{.} \label{de}
  \end{equation}
  For $p \in f^{- 1} (a)$ we write $x_p = \eta (\cdummy, p)$. Then $x_p$ is
  the solution of the initial value problem
  \[ x' = \phi (x) \text{, \qquad} x (a) = p \text{.} \]
  Let $\beta$ be the maximal real number such that $x_p$ is defined on $[a,
  \beta)$. Then for $t \in [a, \beta)$ we have
  \begin{align*}
    (f \circ x_p)' (t) & =  \nabla f (x_p (t)) \cdummy x_p' (t) = \nabla f
    (x_p (t)) \cdummy \phi (x_p (t))\\
    & =  \nabla f (x_p (t)) \cdummy \frac{\nabla f (x_p (t))}{| \nabla f
    (x_p (t)) |^2} = 1 \text{.}
  \end{align*}
  We claim that $\beta > b$. If $\beta \leq b$, then for $t \in [a, \beta)$ we
  have
  \begin{equation}
    f (x_p (t)) = f (p) + \int_a^t (f \circ x_p)' (\tau)  \hspace{0.17em}
    \mathd \tau =f(p)+(t-a)= t. \label{E}
  \end{equation}
  Thus $x_p (t) \in \Omega$, hence $x_p (t) \in W$. Now let $t_{\pm} \in [a,
  \beta)$, from
  \[
    |x_p (t_+) - x_p (t_-) |  =  \left| \int_{t_-}^{t_+} \phi (x_p (t))
    \mathd t \right| \leq |t_+ - t_- | \cdummy \sup_{q \in W} | \phi (q) |
  \]
  and the Cauchy criterion we see that as $t \nearrow \beta$, $x_p (t)$ has a
  limit $x_p (\beta)$. Letting $t \to \beta$ in (\ref{E}) we get $f (x_p
  (\beta)) = \beta \leq b$. Thus $x_p (\beta) \in W$, and the solution $x_p$
  can be extended beyond $\beta$, contradicting the maximality of $\beta$.

  Having verified $\beta > b$, by continuous dependence of solutions on
  initial values, there is $\varepsilon (p) > 0$ such that for $q \in
  B_{\varepsilon (p)} (p)$, $x_q = \eta (\cdummy, q)$ is defined at least on
  $[a, b]$ as well. Then,
  \[ A = W \cap \bigcup_{p \in f^{- 1} (a)} B_{\varepsilon (p)} (p) \]
  is an open neighborhood of $f^{- 1} (a)$. We have $[a, b] \times A \subset
  G$, because $(t,q)\in G$ if and only if $\eta$ is defined at $(t,q)$, that is $x_q$ is defined at $t$.

  By the theory of ordinary differential equations, the restriction $\eta :
  [a, b] \times A \rightarrow \mathbb{R}^n$ is a $C^1$ map with the
  properties:
  \begin{enumerate}
    \item[(a)] $\eta : [a, b] \times f^{- 1} (a) \rightarrow f^{- 1} [a, b]$
    is bijective;

    \item[(b)] \label{i1}for $(t, q) \in [a, b] \times f^{- 1} (a)$, $\eta (t,
    \cdummy)$ is a local diffeomorphism near $q$, thus its Jacobian matrix
    $\partial_p \eta = (\partial_{x^i} \eta^j)_{i, j = 1, \ldots, n}$ at $(t,
    q)$ is invertible.
  \end{enumerate}
  For simplicity we first consider the case that $f^{- 1} (a)$ is a
  \tmtextit{parametrized surface}, i.e., the image of a single
  $C^1$-parametrization; this is the case if $f^{- 1} (a)$ is diffeomorphic to
  the unit sphere $S^{n - 1}$.

Let $\varphi : U \rightarrow \mathbb{R}^n$
  be a $C^1$-\tmtextit{parametrization} of $f^{- 1} (a)$, namely $U$ is a
  Jordan measurable compact region in $\mathbb{R}^{n - 1}$, $\varphi$ is
  continuous, and $\varphi (U) = f^{- 1} (a)$, moreover $\varphi
  |_{U^{\circ}}$ is injective and $C^1$, $\tmop{rank} \varphi' (u) = n - 1$
  for $u \in U^{\circ}$, where $U^{\circ}$ is the interior of $U$. We define a
  map $\Phi : [a, b] \times U \rightarrow \mathbb{R}^n$ via
  \begin{equation}
    \Phi (t, u) = \eta (t, \varphi (u)) \text{,}
  \end{equation}
  that is, $\Phi = \eta \circ (1_{[a, b]} \times \varphi)$, see Figure \ref{pic1}.

\begin{figure}[h]
\centering
\includegraphics[width=\textwidth,clip,viewport=9 491 365 627]{fig}
\caption{Definition of the map $\Phi$}
\label{pic1}
\end{figure}

Using the above properties of $\eta$, it is easy to see that $\Phi ([a, b]
  \times U) = f^{- 1} [a, b]$, $\Phi |_{(a, b) \times U^{\circ}}$ is $C^1$ and
  $\Phi$ maps $(a, b) \times U^{\circ}$ injectively \tmtextit{into} $f^{- 1}
  [a, b]$. Moreover, using Property (b) and $\tmop{rank} \varphi' (u) = n - 1$
  we have
  \begin{equation}
    \tmop{rank} \partial_u \Phi (t, u) = \tmop{rank} [\partial_p \eta (t,
    \varphi (u)) \varphi' (u)] = n - 1 \text{.} \label{rk}
  \end{equation}
  Hence $\Phi (t, \cdummy) : U \rightarrow \mathbb{R}^n$ is a parametrization
  of $f^{- 1} (t)$ for $t \in [a, b]$, which gives rise to normal vector
  \[ N_t (u) = \left( \frac{\partial (\Phi^2, \ldots, \Phi^n)}{\partial (u^1,
     \ldots, u^{n - 1})}, \ldots, (- 1)^{n + 1} \frac{\partial (\Phi^1,
     \ldots, \Phi^{n - 1})}{\partial (u^1, \ldots, u^{n - 1})} \right) \]
  of $f^{- 1} (t)$ at $\Phi (t, u)$; $N_t (u) \neq 0$ because of (\ref{rk}).
  For $(t, u) \in (a, b) \times U^{\circ}$,
  \[
    \Phi' (t, u)  =  (\partial_t \Phi, \partial_u \Phi) = \left(
    \begin{array}{cccc}
      \partial_t \Phi^1 & \partial_{u^1} \Phi^1 & \cdots & \partial_{u^{n -
      1}} \Phi^1\\
      \partial_t \Phi^2 & \partial_{u^1} \Phi^2 & \cdots & \partial_{u^{n -
      1}} \Phi^2\\
      \vdots & \vdots &  & \vdots\\
      \partial_t \Phi^n & \partial_{u^1} \Phi^n & \cdots & \partial_{u^{n -
      1}} \Phi^n
    \end{array} \right) \text{.}
  \]
  Note that $\partial_t \Phi (t, u) = \partial_t \eta (t, \varphi (u))$ and,
  for $i \in \{1, \ldots, n\}$ the cofactor of $\partial_t \Phi^i$ in $\det
  \Phi' (t, u)$ is precisely the $i$-th component of $N_t (u)$. Expanding
  $\det \Phi' (t, u)$ along the first column and using the differential
  equation (\ref{de}), we deduce
  \begin{align}
    | \det \Phi' (t, u) | & =  | \partial_t \Phi (t, u) \cdummy N_t (u) | = |
    \partial_t \eta (t, \varphi (u)) \cdummy N_t (u) | \nonumber\\
    & =  \left| \frac{\nabla f (\Phi (t, u))}{| \nabla f (\Phi (t, u)) |^2}
    \cdummy N_t (u) \right| = \frac{|N_t (u) |}{| \nabla f (\Phi (t, u)) |}
    \text{,}  \label{f}
  \end{align}
  where the last equality follows from the observation that both $\nabla f
  (\Phi (t, u))$ and $N_t (u)$ are normal vectors of the surface $f^{- 1} (t)$
  at $\Phi (t, u)$, hence the absolute value of their dot product equals the
  product of their norms.

  From (\ref{f}) we have $\det \Phi' (t, u) \neq 0$. Hence the bijection
  \[ \Phi : (a, b) \times U^{\circ} \rightarrow \Phi ((a, b) \times U^{\circ})
  \]
  is a diffeomorphism. Applying a version of the changing variable formula
  (see for example {\cite[Theorem 1.1]{MR3711061}}, which only requires the
  transformation to be a diffeomorphism between the \tmtextit{interiors} of
  the regions of integration) and then the Fubini theorem, we deduce
  \begin{align}
    \int_{f^{- 1} [a, b]} g (x) \mathd x & =  \int_{\Phi ([a, b] \times U)} g
    (x) \mathd x \nonumber\\
    & =  \int_{[a, b] \times U} g (\Phi (t, u)) | \det \Phi' (t, u) | \mathd
    u \mathd t \nonumber\\
    & =  \int_a^b \mathd t \int_U g (\Phi (t, u)) \frac{|N_t (u) |}{| \nabla
    f (\Phi (t, u)) |} \mathd u \nonumber\\
    & =  \int_a^b \mathd t \int_{f^{- 1} (t)} \frac{g (x)}{| \nabla f (x) |}
    \mathd \sigma \text{,}  \label{fti}
  \end{align}
  where the last equality is due to the formula for computing surface integral
  over $f^{- 1} (t)$ via its parametrization $u \mapsto \Phi (t, u)$.

  If $f^{- 1} (a)$ is the union of several \tmtextit{interiorly disjoint}
  parametrized surfaces $\varphi_i (U_i)$, $i = 1, \ldots, \ell$, then for
  $\Phi_i : [a, b] \times U_i \rightarrow \mathbb{R}^n$ defined via $\Phi_i
  (t, u) = \eta (t, \varphi_i (u))$, we have
  \[ f^{- 1} [a, b] = \bigcup_{i = 1}^{\ell} \Phi_i  ([a, b] \times U_i)
     \text{,} \]
  see Figure \ref{fyc}. Generalizing the computation in (\ref{fti}) yields
  \begin{align*}
    \int_{f^{- 1} [a, b]} g (x) \mathd x & =  \sum_{i = 1}^{\ell}
    \int_{\Phi_i  ([a, b] \times U_i)} g (x) \mathd x\\
    & =  \sum_{i = 1}^{\ell} \int_a^b \mathd t \int_{\Phi_i (t, U_i)}
    \frac{g (x)}{| \nabla f (x) |} \mathd \sigma\\
    & =  \int_a^b \left( \sum_{i = 1}^{\ell} \int_{\Phi_i (t, U_i)} \frac{g
    (x)}{| \nabla f (x) |} \mathd \sigma \right) \mathd t\\
    & =  \int_a^b \mathd t \int_{f^{- 1} (t)} \frac{g (x)}{| \nabla f (x) |}
    \mathd \sigma
  \end{align*}
  because the interiorly disjoint union of $\Phi_i (t, U_i)$ is exactly $f^{-
  1} (t)$, see Figure \ref{fyc}.
\end{proof}

\begin{remark}
  The gradient flow has been used in many areas of mathematics. For example,
  it was used to prove the deformation lemma in critical point theory
  ({\cite[{\textsection}1.2]{MR1400007}}).

  In {\cite[pp. 504--505]{ding24}}, using similar idea the coarea formula
  (\ref{ec}) is proved for the case $n = 2$. As can be seen from our proof of
  Theorem \ref{t7}, the computation of the Jacobian determinant $\det \Phi' (t,
  u)$ in (\ref{f}) is crucial. In {\cite[Page 505]{ding24}}, because only $n = 2$ is
  concerned this is done by using cross product $\Phi_t \times \Phi_u$. A novelty of our proof is the clever computation of $\det \Phi'
  (t, u)$ in (\ref{f}) for arbitrary dimension $n \geq 2$. Therefore, with
  some new insight our note generalizes the first half of {\cite{ding24}}. As
  far as we know, the physical derivation presented before Theorem \ref{t7} is
  original.

  After the research has completed, we noticed that in {\cite[Lemma
  1]{Ding2018}}, the higher dimensional coarea formula (\ref{ec}) is stated.
  However, many crucial details are missing in the proof there. In particular,
  the author mentioned ``transformation of variables'', but he didn't present
  the computation of the Jacobian determinant. Therefore, our detailed proof of the
  coarea formula fills this gap.
\end{remark}

\end{document}